\theoremstyle{plain}    
 \newtheorem{theorem}{Theorem}[section]
\theoremstyle{definition}
 \newtheorem{definition}[theorem]{Definition}
\theoremstyle{plain}    
 \newtheorem{proposition}[theorem]{Proposition}
 \newtheorem{corollary}[theorem]{Corollary}
 \newtheorem{lemma}[theorem]{Lemma} 
 \theoremstyle{definition}
\theoremstyle{remark}
 \newtheorem{remark}[theorem]{Remark}
 \newtheorem{example}[theorem]{Example}
\def\N{\mathbb{N}}
\def\C{\mathbb{C}}
\def\K{\mathcal{K}}
\def\B{\mathcal{B}}
\def\X{\mathcal{X}}
\DeclareMathOperator{\diam}{diam}
\DeclareMathOperator{\id}{id}
\DeclareMathOperator{\spa}{span}
\let\eps\varepsilon
\def\D{\mathscr{D}}
\def\P{\mathscr{P}}
\title{Uniform version of Weyl--von Neumann theorem} 
\author{J\'an \v Spakula}
\address{Mathematisches Institut, Universit\"at M\"unster, Einsteinstr.\ 62,
    48149 M\"unster, Germany}
\email{jan.spakula@uni-muenster.de}
\subjclass[2000]{Primary 47L99, 46L80}
\begin{document}

\begin{abstract}
  We prove a ``quantified'' version of the Weyl--von Neumann theorem, more
  precisely, we estimate the ranks of approximants to compact operators
  appearing in the Voiculescu's theorem applied to commutative algebras. This
  allows considerable simplifications in uniform $K$-homology theory, namely
  it shows that one can represent all the uniform $K$-homology classes on a
  fixed Hilbert space with a fixed *-representation of $C_0(X)$, for a large
  class of spaces $X$.
\end{abstract}

\maketitle

\section{Introduction}

Voiculescu's theorem \cite{voiculescu:voiculescu-theorem} states that
whenever one has a non-degenerate representation $\pi:E\to \B(H_\pi)$ of a
separable unital C*-algebra $E$ and a completely positive map $\rho:E\to
\B(H_\rho)$ with the property that $\pi(e)\in \K(H)\implies \rho(e)=0$ for
every $e\in E$, then there exists a unitary $V: H_\rho\to H_\pi$, such that
$\rho(e)-V^*\pi(e)V\in \K(H_\rho)$. Loosely worded, when $\pi:E\to \B(H_\pi)$
is sufficiently big, then any other representation of $E$ is a compression of
$\pi$ modulo compacts. In this paper, we prove a ``quantified'' version of
this statement for commutative algebras $E$ and for $*$-homomorphisms
$\rho$. More precisely, if we consider an class $\X$ of compact spaces $X$
having jointly locally bounded geometry (defined below), we can obtain bounds
on the ranks of approximants of $\rho(f)-V^*\pi(f)V$ which depend only on the
type of the function $f\in C(X)$ and is independent of the space $X\in\X$
itself.

The special case of Voiculescu's theorem, when the algebra $E$ in question is
commutative, is usually called the Weyl--von Neumann theorem and stated in
the form that any normal operator on a separable Hilbert space can be
expressed as a diagonal operator plus a compact operator, and additionally
the norm of the compact can be made arbitrarily small.

The motivation and an application of our theorem come from considerations in
analytic $K$-homology and coarse geometry. The analytic $K$-homology theory
is a generalized homology theory on the category of locally compact Hausdorff
topological spaces (with proper maps); in its full generality even a
contravariant functor on the category of separable C*-algebras
\cite{kas:orig,higson-roe:analKhomol}. The connection to coarse geometry and
index theory comes from the existence of a coarse index map from analytic
$K$-homology of a locally compact space into the $K$-theory of its Roe
C*-algebra \cite{roe:CBMS}. The assertion that this map is an isomorphism has
notable applications for instance when the space $X$ is question is a Rips
complex of a Cayley graph of a finitely generated group $\Gamma$, namely it
implies the Novikov conjecture for $\Gamma$
\cite{yu:CBC-for-embeddable}. Also, since analytic $K$-homology of spaces is
``computable'' (by means of exact sequences), this provides a way to compute
$K$-theory of Roe C*-algebras.

The Voiculescu's theorem provides a way to simplify the definition and proofs
in analytic $K$-homology as follows: The cycles for a C*-algebra $A$ are
represented by Fredholm modules $(H,\phi,F)$, where $H$ is a (graded) Hilbert
space, $\phi:A\to\B(H)$ is a *-homomorphism (of degree 0), $F\in \B(H)$ (has
degree 1) and $[F,\phi(a)]\in \K(H)$, $(F^2-1)\phi(a)\in \K(H)$ for all $a\in
A$. By introducing a suitable equivalence relation on Fredholm modules one
obtains the $K$-homology group. A consequence of the Voiculescu's theorem is
that we can fix any Hilbert space $H_0$ with a representation
$\phi_0:A\to\B(H_0)$ which misses $\K(H_0)\setminus\{0\}$ and then any
$K$-homology class of $A$ can be represented as a Fredholm module of the form
$(H_0,\phi_0,\cdot)$.

We present here an application of the main result of this paper to uniform
$K$-homology: Uniform $K$-homology is a version of analytic $K$-homology for
locally compact separable metric spaces $X$ with bounded geometry, defined in
\cite{spakula:unifKhomol}, from which there is an index map into the
$K$-theory of the uniform Roe C*-algebra $C^*_u(X)$. Furthermore, one can
characterize amenability of $X$ in terms of this theory. A consequence of the
main result of this paper is that one can work on a fixed uniform Fredholm
module in the uniform $K$-homology (as explained in the previous paragraph
for the ordinary $K$-homology) for locally compact spaces $X$ which have
bounded geometry not only on the large scale (i.e.\ in the coarse geometric
understanding of bounded geometry), but also on the small scale.

\smallskip
\textbf{Acknowledgment:} The author would like to thank Jozef
Hale\v s for his valuable suggestions concerning topological questions.

\section{Result}

The following definition formalizes the notion of having bounded geometry on
the small scale. Naturally, any single compact metric space $X$ by itself has
bounded geometry in any sense, but the point of our main Theorem
\ref{thm:uniform-WvNB} is to obtain estimates which are independent of the
space $X$. The notion defined below allows us to specify classes of spaces
for which this is possible.

\begin{definition}
  We say that compact metric spaces in a class $\X$ have \emph{jointly
    locally bounded geometry}, or just shortly that $\X$ is
  \emph{admissible}, if the following property holds: Given $\eps>0$, there
  exists $N\geq0$, such that for any $X\in\X$ there exists an $\eps$-net in
  $X$ of cardinality at most $N$.
\end{definition}

\begin{example}\label{ex:nice-spaces}
  If $Y$ is a complete Riemannian manifold with bounded geometry (in the
  sense of Roe \cite{roe:index-on-open}), or a uniformly locally finite
  simplicial complex of finite dimension (for instance a Rips complex of a
  finitely generated group), then for any $R>0$, the collection closed of
  $R$-balls in $Y$ comprises an admissible class of spaces.
\end{example}

\begin{remark}
  Note that bounding only the diameter and the (covering) dimension of a
  space does not yield an admissible class. Just consider $1$-dimensional
  simplicial complexes with one ``central'' vertex and $n$ different edges
  attached to it. The size of $\eps$-nets in such a space grows with $n$, but
  neither the diameter nor the dimension does not.
\end{remark}

For a metric space $X$ and $L\geq0$, we denote 
$$
C_L(X)=\left\{ f:X\to\C\mid f\text{ is $L$-Lipschitz}, \|f\|_\infty\leq 1
\right\}
$$
and by $C(X)$ the C*-algebra of continuous complex-valued functions on
$X$.

\begin{lemma}\label{lem:nets-in-CL}
  If $\X$ is admissible, then the following also holds: Given $L\geq0$ and
  $\eps>0$, there exists $N\geq0$, such that for any $X\in\X$ there exists an
  $\eps$-net of simple Borel functions for $C_L(X)$ containing at most $N$
  elements.
\end{lemma}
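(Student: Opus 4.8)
The plan is to discretize both the domain and the range, using admissibility for the former. Fix $L$ and $\eps$; we may assume $L>0$, since for $L=0$ every $f\in C_L(X)$ is a constant of modulus at most $1$, and a fixed finite $\eps$-net of the closed unit disk $\{z\in\C:|z|\le1\}$ already serves as a net of (constant, hence simple Borel) functions. Put $\delta=\eps/(2L)$ and invoke admissibility to get $N_0\ge0$ such that every $X\in\X$ has a $\delta$-net of cardinality at most $N_0$. Fix once and for all a finite set $\{c_1,\dots,c_M\}\subseteq\C$ that is an $\eps/2$-net of the closed unit disk; here $M$ depends only on $\eps$. The claim will be that $N:=M^{N_0}$ works.

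Given $X\in\X$, choose a $\delta$-net $\{x_1,\dots,x_n\}$ with $n\le N_0$ and cut $X$ into Borel ``Voronoi cells'': set
\[
B_i=\bigl\{x\in X: d(x,x_i)\le d(x,x_j)\text{ for all }j,\ \text{and } d(x,x_i)<d(x,x_j)\text{ for all }j<i\bigr\}.
\]
Each function $x\mapsto d(x,x_j)$ is continuous, so the $B_i$ are Borel; by construction they are pairwise disjoint, cover $X$, and since $\{x_j\}$ is a $\delta$-net one has $B_i\subseteq\{x:d(x,x_i)\le\delta\}$. Take as the candidate net the collection
\[
\S_X=\Bigl\{\,\sum_{i=1}^n c_{k(i)}\chi_{B_i}\ :\ k\colon\{1,\dots,n\}\to\{1,\dots,M\}\,\Bigr\},
\]
which consists of simple Borel functions and has cardinality at most $M^n\le M^{N_0}=N$, a bound independent of $X$. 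To see it is an $\eps$-net for $C_L(X)$, take $f\in C_L(X)$; for each $i$ pick $k(i)$ with $|f(x_i)-c_{k(i)}|\le\eps/2$ (possible as $|f(x_i)|\le1$), and set $g=\sum_i c_{k(i)}\chi_{B_i}\in\S_X$. For $x\in B_i$,
\[
|f(x)-g(x)|=|f(x)-c_{k(i)}|\le|f(x)-f(x_i)|+|f(x_i)-c_{k(i)}|\le L\,d(x,x_i)+\eps/2\le L\delta+\eps/2=\eps,
\]
and since the $B_i$ partition $X$ this gives $\|f-g\|_\infty\le\eps$.

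The argument is essentially bookkeeping, and I do not anticipate a genuine obstacle. The only two points that need care are: (i) discretizing the \emph{range} as well as the domain, so that what is bounded independently of $X$ is the total number of simple functions produced, not merely the number of values each one attains; and (ii) checking that the cells $B_i$ really can be taken Borel and contained in the $\delta$-balls, which is immediate from continuity of the distance functions and the net property. One may replace $\delta=\eps/(2L)$ by $\eps/(3L)$ and the $\eps/2$-net by an $\eps/3$-net if a strict inequality $\|f-g\|_\infty<\eps$ is preferred.
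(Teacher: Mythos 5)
Your proof is correct and follows essentially the same strategy as the paper's: use admissibility to discretize the domain into Borel ``Voronoi'' cells indexed by a bounded-size net, discretize the range by a fixed finite net of the unit disk, and take the simple functions constant on cells with values in the range net. In fact you are slightly more careful than the paper in two minor respects: you use a genuine $\eps/2$-net of the complex unit disk (the paper's grid $\{0,1/K,\dots,1\}$ only covers $[0,1]$ and should really be a $2$-dimensional grid for $\C$-valued functions), and your count $M^{N_0}$ (values to the power of cells) is the correct one, whereas the paper's ``$N=M^{K+1}$'' has the base and exponent transposed --- neither slip affects the conclusion, since in both cases one gets a bound independent of $X$.
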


\begin{proof}
  The fact that $C_L(X)$ is compact follows for instance from Arzela--Ascoli
  theorem, but the point is to have a bound on the number of elements in an
  $\eps$-net.

  Given $L\geq 0$ and $\eps>0$, we choose $\eps_1>0$ small enough and $K$
  large enough, so that $\frac1K+L\eps_1<\eps$. Now for $\eps_1>0$, the
  admissibility of $\X$ provides us with an upper bound on the size of
  $\eps_1$-nets: denote it by $M\geq0$. Taking any $X\in\X$ and an
  $\eps_1$-net $E$ in $X$ with $|E|\leq M$, we consider any Borel partition
  $\{D_x\mid x\in E\}$ of $X$, such that $x\in D_x$ and
  $\diam(D_x)\leq\eps_1$ for each $x\in E$. For instance, we can obtain such
  a partition by considering the ``closest point in $E$'' map.

  For an integer $K\geq1$, consider the collection of simple functions of the
  form $s=\sum_{x\in E}\frac{i_x}K\chi_{D_x}$, where each $i_x\in \{0,\dots,
  K\}$. There is at most $N=M^{K+1}$ of them. Furthermore, taking any $f\in
  C_L(X)$, there is at least one of them that approximates $f$ within
  $\frac1K$ at each $x\in E$, and then by the Lipschitz property, it
  approximates $f$ within $\frac1{K}+L\eps_1<\eps$ at any $y\in X$.
\end{proof}

\begin{theorem}\label{thm:uniform-WvNB}
  For an admissible class $\X$ of compact metric spaces there exists a
  function $M_\X:[0,\infty)\times(0,1]\to\N$, which satisfies the following:
  For any $X\in\X$, any *-representations $\pi:C(X)\to \B(H_\pi)$,
  $\rho:C(X)\to \B(H_\rho)$, such that $\pi$ is injective and
  $\pi(C(X))\cap\K(H_\pi)=\{0\}$, there exists an isometry $V:H_\rho\to
  H_\pi$, such that for any $L\geq0$ and $\eps>0$, all the operators $V^*
  \pi(f)V -\rho(f)$, $f\in C_L(X)$, are within $\eps$ from an operator with
  rank $M_\X(L,\eps)$.
\end{theorem}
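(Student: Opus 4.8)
I will run a quantified form of Voiculescu's inductive construction of $V$, keeping every quantity controlled in terms of the partition data coming from admissibility; the operator theory is classical, the new ingredient is a bookkeeping of ranks. Using admissibility I first fix, for each $n\ge1$, a finite Borel partition $\P_n$ of $X$ into sets of diameter $\le c_\X 2^{-n}$, with $\P_{n+1}$ refining $\P_n$, with $|\P_n|\le\hat N_n$, and --- the one point going slightly beyond the proof of Lemma~\ref{lem:nets-in-CL} --- with every piece having nonempty interior (common refinements of ``closest point'' partitions of $2^{-j}$-nets, $j\le n$, work, their cells containing balls about the centres); here $c_\X,\hat N_n$ depend only on $\X$. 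Extend $\pi,\rho$ to bounded Borel functions via the spectral theorem, written $\bar\pi,\bar\rho$, and set $P_D=\bar\pi(\chi_D)$, $Q_D=\bar\rho(\chi_D)$. The hypotheses on $\pi$ are used exactly once, through the fact that \emph{$P_DH_\pi$ is infinite dimensional for every partition set $D$}: pick a nonzero continuous $g$ with $0\le g\le\chi_D$; then $\pi(g)=\bar\pi(g)$ is nonzero (as $\pi$ is injective) and noncompact (as $\pi(C(X))\cap\K(H_\pi)=\{0\}$), hence of infinite rank, and $P_D\ge\bar\pi(g)$.

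\textbf{Construction of $V$.} Fix a dense sequence $(\xi_m)$ in $H_\rho$ and put
$$E_n=\text{(projection onto) }\Span\{\,Q_D\xi_m:\ D\in\P_n,\ m\le n\,\}.$$
Since $Q_D\xi_m=\sum_{D'\subseteq D}Q_{D'}\xi_m$, this is an increasing sequence of finite-rank projections with $E_n\uparrow 1$, with $E_nH_\rho$ a $\P_n$-spectral submodule of $\rho$ (an orthogonal sum of subspaces of the $Q_DH_\pi$, $D\in\P_n$), and crucially with $\rank E_n\le n\,|\P_n|\le n\,\hat N_n=:G(n)$, a bound depending only on $\X$ and $n$. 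I then build $V$ so that it carries each $E_nH_\rho$ onto an \emph{approximate} $\P_n$-spectral submodule of $\pi$: with foresight, and using the infinite dimensionality above, one chooses $V\xi_m\in H_\pi$ so that the prescribed data $\langle P_DV\xi_m,V\xi_{m'}\rangle\approx\langle Q_D\xi_m,\xi_{m'}\rangle$ ($D$ over the partition sets) are mutually consistent up to an error $\le\delta_n$ at level $n$, with $\delta_n\hat N_n$ as small as desired --- this is exactly the standard ``approximation'' step of Voiculescu's argument --- which forces $V(Q_D\xi_m)$ to be within $\delta_n$ of $P_DV\xi_m\in P_DH_\pi$. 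One checks this extends to an isometry $V:H_\rho\to H_\pi$ with $\rank(VE_nV^*)=\rank E_n$.

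\textbf{The estimate.} Let $L\ge0$, $\eps\in(0,1]$, $f\in C_L(X)$; let $n$ be minimal with $c_\X'(L+1)2^{-n}<\eps$ ($c_\X'$ depending only on $\X$), and write $T=V^*\pi(f)V-\rho(f)$, $E=E_n$. The corner terms $ET+(1-E)TE$ have rank $\le 2\rank E\le 2G(n)$, so it suffices to bound $\|(1-E)T(1-E)\|$. Put $W=V(1-E)$; the matching gives $W^*P_DW=(1-E)Q_D(1-E)+(\text{error }\le\delta_n)$ for $D\in\P_n$. Since the $P_D$ are mutually orthogonal and $\pi(f)$ restricted to $P_DH_\pi$ equals $\bar\pi(f\chi_D)$, which is $f(x_D)P_D$ up to $L\diam D$ in norm, summing over $D$ produces \emph{no} factor of $|\P_n|$; likewise on the $\rho$-side, and the $\delta_n$-errors are absorbed as $\delta_n\hat N_n$ is tiny. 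Hence
$$\|(1-E)T(1-E)\|=\bigl\|W^*\pi(f)W-(1-E)\rho(f)(1-E)\bigr\|\ \le\ c_\X'(L+1)2^{-n}\ <\ \eps,$$
so $T$ lies within $\eps$ of an operator of rank $\le 2G(n)$, uniformly in $f\in C_L(X)$. Taking $M_\X(L,\eps)=2G\bigl(\max\{\lceil\log_2(c_\X'(L+1)/\eps)\rceil,1\}\bigr)$ --- depending only on $\X$ --- completes the proof.

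\textbf{The main obstacle.} The real difficulty is to run the construction so that, simultaneously for \emph{all} $n$: $1-E_n$ reduces the $\P_n$-spectral projections of $\rho$ and is matched by the single fixed isometry $V$ to those of $\pi$ up to small error --- no $V$ can be exactly compatible with two different scales on overlapping subspaces, which is precisely where one needs that $\pi$ is faithful and ``big'', so that the $\pi$-side matching can be done only approximately but with errors under control; the union of the $E_nH_\rho$ is dense; and $\rank E_n$ is bounded by a function of $n$ and $\X$ \emph{alone}. This last point --- that the amount of $\rho$ ``committed below scale $2^{-n}$'' can be kept independent of the (possibly highly multiple) representation $\rho$ --- is the heart of the matter, and works only because there are infinitely many scales over which to spread the commitment while the per-scale cost is the admissible bound $|\P_n|\le\hat N_n$.
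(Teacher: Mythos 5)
Your first paragraph and your \(E_n\) construction coincide with the paper's, and the final translation of \(\dim E_n\) into \(M_\X(L,\eps)\) is exactly the right bookkeeping. The gap is your second paragraph, the construction of \(V\), which carries the entire weight of the theorem and is left unjustified. You assert that one can choose \(V\xi_m\) so that the matrix data \(\langle P_D V\xi_m, V\xi_{m'}\rangle\) matches \(\langle Q_D\xi_m,\xi_{m'}\rangle\) up to \(\delta_n\) \emph{simultaneously for every scale \(n\)}, with \(\delta_n\hat N_n\to 0\), and wave at ``the standard approximation step of Voiculescu's argument.'' That standard step handles a single finite block at a time (producing approximately intertwining finite-rank pieces that are then summed); it does not produce a single dense sequence \((V\xi_m)_m\) whose spectral decompositions across infinitely many nested partitions are all simultaneously under control. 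You flag this multi-scale compatibility yourself in your last paragraph as ``the heart of the matter,'' but the proposal never actually resolves it. There are also two secondary soft spots: the passage from the matrix-entry bounds to the vector bound \(\|V(Q_D\xi_m)-P_DV\xi_m\|\le\delta_n\) is not a formal consequence of what is stated, and the subsequent operator-norm bound on \(W^*P_DW-(1-E)Q_D(1-E)\) likewise needs an argument rather than an appeal to ``matching.''

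The paper's resolution is genuinely different and worth internalizing: it gets \emph{exact}, not approximate, intertwining of the compressed algebras \(A_k\) on the nested tails \((E_k^\rho)^\perp\) --- which your ``main obstacle'' paragraph implicitly assumes to be impossible. The mechanism is a careful inductive choice of the orthonormal basis \(\{e_i\}\) of \(H_\pi\): each \(e_i\) is built as a rapidly convergent sum \(\sum_k e_i^{(k)}\) guaranteeing that \(P^\pi e_i\ne 0\) for \emph{every} partition projection \(P\in\P_X\) (this is where injectivity, \(\pi(C(X))\cap\K=\{0\}\), and the nonempty-interior condition on the cells enter: they make each \(P^\pi\) and \(1-P^\pi\) infinite). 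With that basis, every \(\dim\bigl(P^\pi(H_\pi)\cap(E_{k+1}^\pi\ominus E_k^\pi)\bigr)\) is maximal, hence dominates the corresponding \(\rho\)-side dimension, and one defines \(V\) by sending the \(\rho\)-diagonalizing basis of each block \(P^\rho(H_\rho)\cap(E_{k+1}^\rho\ominus E_k^\rho)\) into the \(\pi\)-diagonalizing basis of the matching \(\pi\)-block. Then \(V^*T^\pi V=T^\rho\) holds exactly on \((E_k^\rho)^\perp\) for all \(T\in A_k\), so \(V^*T^\pi V-T^\rho\) has rank at most \(\dim E_k^\rho\) with no \(\delta_n\)-error budget at all, and the \(\eps\)-approximation comes purely from replacing \(f\in C_{L_k}(X)\) by a nearby \(T\in A_k\). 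That is the idea your proposal is missing, and it is the one that makes the quantitative statement actually provable.
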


\begin{proof}
  The strategy is to follow parts of the proof that any representation
  $\rho:C(X)\to\B(H_\rho)$ is diagonalizable modulo compacts (this is the
  Weyl--von Neumann theorem), with some estimates on the ranks of the
  finite-rank approximants to the compacts involved. The diagonalizations of
  two representations of $C(X)$ will then provide us with an isometry. We
  then need to analyze the diagonalizations to prove that this construction
  will give us the estimates we require.

  For a representation $\rho:C(X)\to \B(H_\rho)$, we shall without mention use
  the fact that it extends to a representation of the algebra of bounded
  Borel functions on $X$ into $\B(H_\rho)$. For the sake of brevity, we shall
  write $F^\rho\in\B(H_\rho)$ for $\rho(F)$, where $F$ is a bounded Borel
  function on $X$.

  We fix sequences $(L_k)_{k\in\N}$, $L_k\nearrow\infty$, and
  $(\eps_k)_{k\in\N}$, $\eps_k\searrow0$, of positive reals. From the
  admissibility of $\X$ it follows that there exists a sequence
  $(S_k)_{k\in\N}$ of positive integers, such that the following construction
  can be executed: An inductive application of the previous lemma provides us
  with a sequence $(I_k)_{k\in\N}$, $I_k\leq S_k$ of non-negative integers,
  and for any $X\in\X$ with a sequence of (automatically commuting)
  projections $\P_X=\{P_1=\chi_{X_1},\dots\}$, such that $C(X)\subset
  A=\overline{\spa(\P_X)}$ and that $C_{L_k}(X)$ is contained in the
  $\eps_k$--neighborhood of $\spa\{P_1,\dots,P_{I_k}\}$ (in the $\sup$-norm
  on bounded Borel functions; whence it is also true after mapping through
  any representation $\rho:C(X)\to\B(H_\rho)$). For the sake of simplifying
  the notation later on in this proof, we shall arrange that the projections
  in $\P_X$ enjoy some extra properties. Namely, we assume that the algebras
  the $A_k$ generated by $\{P_1,\dots,P_{I_k}\}$ have in fact a linear basis
  $\{P_{I_{k-1}+1},\dots,P_{I_k}\}$ and that
  $P_{I_{k-1}+1}+\dots+P_{I_k}=1$. This can be arranged by choosing each
  successive partition $\{X_{I_{k-1}+1}, \dots, X_{I_k}\}$ of $X$ in the
  proof of the previous lemma so that each $X_j$, $I_{k-1}<j\leq I_k$ is a
  subset of some $X_{j'}$, $j'\leq I_{k-1}$ chosen previously. Furthermore,
  we assume that the sets $X_j$ have nonempty interior. These conditions can
  be achieved by carefully choosing the partitions, for instance by
  inductively using admissibility for the closures of members of the previous
  partition.

  \smallskip

  Since the property of having jointly locally bounded geometry is preserved
  under taking closed subspaces of metric spaces, we can assume that $\X$ is
  closed under this operation, and hence that $\rho$ is without loss of
  generality injective. Furthermore, we assume that $\rho$ is unital.

  The proof of diagonalizability of $\rho$ modulo compacts is similar to the
  proofs of the Weyl--von Neumann theorem, e.g.\ \cite[Theorem
  II.4.1]{davidson:c*alg-by-example} and \cite[Theorem
  2.2.5]{higson-roe:analKhomol}. Fix an orthonormal basis $\{e_1,e_2,\dots\}$
  for $H_\rho$.
  Denote $R_k=\{P_{I_{k-1}+1},\dots,P_{I_k}\}$. Note that the projections in
  $R_k$ are mutually orthogonal, their sum is $1$ and the projections in
  $R_j$, $j\leq k$, are sums of projections from $R_k$. We define subspaces
  $E_k^\rho$ of $H_\rho$ by setting
  $$
  E_k^\rho=\spa\{P^\rho e_j\mid 1\leq j\leq k,\, P\in R_k\}.
  $$
  Then $E_k^\rho$ is an increasing sequence of finite-dimensional spaces
  ($\dim(E_k^\rho)\leq k(I_k-I_{k-1})$). Clearly $\spa\{e_1,\dots,e_k\}\subset
  E_k^\rho$, hence $\cup_kE_k^\rho$ is dense in $H_\rho$.

  Consider now the algebras $A_k$ generated by $R_k$ (which is also a linear
  basis of $A_k$). Since both $E_k^\rho$ and $E_{k+1}^\rho$ are invariant for
  $\rho(A_k)$, the subspace $E_{k+1}^\rho\ominus E_k^\rho$ decomposes as the
  orthogonal sum of subspaces $P^\rho(E_{k+1}^\rho\ominus E_k^\rho)$, $P\in
  R_k$. Thus we may choose an orthonormal basis of $E_{k+1}^\rho\ominus
  E_k^\rho$ which respects this decomposition. This diagonalizes $\rho(A_k)$
  on $E_{k+1}^\rho\ominus E_k^\rho$. Note that since $A_j\subset A_k$ for
  $j\leq k$, it also diagonalizes the $\rho$-images of all these algebras. In
  this manner, we obtain an orthonormal basis of the whole $H_\rho$. In
  particular, each $T^\rho$, $T\in A_k$, is eventually diagonal in this basis
  and it differs from a diagonal operator by at most a
  rank-$(\dim(E_k^\rho))$ operator. Furthermore, if we express $T=\sum_{P\in
    R_k}c_PP$, then the entries on the diagonal of $T^\rho$ in this basis are
  eventually just appropriate coefficients $c_P$.

  Let us remark here (although we shall not need in the rest of the proof),
  that the the operators $D_n=P_n^\rho(1-F_n)$, $n\geq 1$, generate a diagonal
  C*-algebra $\D$, such that $\rho(C(X))\subset \D+\K(H_\rho)$.
  
  \smallskip

  Note that in the case $\rho=\pi$, i.e.\ when $\rho(C(X))\cap
  \K(H_\pi)=\{0\}$, we can choose the orthonormal basis $\{e_1,\dots\}$ in
  such a way that each $\dim(E_k^\rho)$ is maximal possible (for a given $X$ and
  $\P_X$). We choose it inductively. First note that without loss of
  generality we can assume that all the projections $P_j^\pi\in\pi(\P_X)$ are
  infinite (since $P_j=\chi_{X_j}$ with $X_j$ having nonempty interior), and
  also $1-P_j^\pi$ is infinite.

  Let us proceed to the induction which results in
  choosing $e_1$: We choose $e_1^{(1)}$ so that all the vectors from the set
  $V_1=\{P_j^\pi e_1^{(1)}\mid 1\leq j\leq I_1\}$ are nonzero. In the $k$-th step,
  we assume that we have already picked vectors
  $e_1^{(1)},\dots,e_1^{(k-1)}$, such that if we denote
  $V_{k-1}=\{P^\pi(e_1^{(i)}+\dots+e_1^{(k-1)})\mid P\in R_{k-1}\}$ and
  $T_{k-1}=\min(\|v\|,v\in V_{k-1})$, then the vectors in $V_{k-1}$ are
  non-zero, i.e.\ $T_{k-1}>0$. Since the projections involved are infinite,
  we can choose $e_1^{(k)}$ in such a way that all the vectors
  $P^\pi(e_1^{(i)}+\dots+e_1^{(k-1)}+e_1^{(k)})$, $P\in R_k$, are again non-zero
  and $\|e_1^{(k)}\|\leq \frac13T_{k-1}$. Finally let $e_1$ the multiple of
  $\sum_{k\geq1}e_1^{(k)}$ with norm $1$. By the bounds on the norms of
  $e_1^{(k)}$'s, this sum converges, and moreover $P_j^\pi e_1\not=0$ for any $j$
  (since $\|\sum_{k\geq m+1}e_1^{(k)}\|\leq\frac23T_{m}$, thus $P^\pi e_1\not=0$
  follows from $\|P^\pi(e_1^{(1)}+\dots+e_1^{(m)})\|\geq T_{m}$ for $P\in
  R_m$). Furthermore for each $m$, the vectors $P^\pi e_1$, $P\in R_m$, are
  automatically linearly independent since the projections in $R_m$ are
  orthogonal.
  
  We repeat this process for choosing $e_2,\dots$. The only change is that
  when choosing $e_i^{(j)}$, we need to also ensure that it is itself
  orthogonal to $e_1,\dots,e_{i-1}$ and that
  $P^\pi(e_i^{(1)}+\dots+e_i^{(j)})$, $P\in R_j$, are orthogonal to the
  so-far chosen $P^\pi e_1,\dots,P^\pi e_{i-1}$. This is possible since we
  are always excluding only finite-dimensional subspaces and the projections
  involved are infinite. The result of this process is an orthonormal basis
  of $H_\pi$, such that $\dim(E_k^\rho)=k(I_k-I_{k-1})$, which is maximal
  possible. Moreover, also the individual $\dim(P^\pi(H_\pi)\cap
  (E_{k+1}^\pi\ominus E_k^\pi))$,
  $P\in R_k$ are maximal possible among all $\dim(P^\rho(H_\rho)\cap
  (E_{k+1}^\rho\ominus E_k^\pi))$
  for all the representations $\rho:C(X)\to \B(H_\rho)$.

  \smallskip

  By the conclusion in the previous paragraph, there exists an isometry
  $V:H_\rho\to H_\pi$, constructed so that within each $P^\rho(H_\rho)\cap
  (E_{k+1}^\rho\ominus E_k^\rho)$, we send the chosen basis (which
  diagonalizes $\rho(A_k)$) to the (possibly part of) the chosen basis of
  $P^\pi(H_\pi)\cap (E_{k+1}^\pi\ominus E_k^\pi)$.

  It remains to be shown that this isometry will indeed satisfy the estimates
  we require. It is clear from the construction that $V*T^\pi V-T^\rho$ for
  $T\in A_k$ is a finite-rank operator, with rank at most $\dim(E_k^\rho)\leq
  k(I_k-I_{k-1})\leq kS_k$. By our choices, any $f\in C_{L_k}(X)$ is at most
  $\eps_k$-far from some $T\in A_k$, hence $V*\pi(f)V-\rho(f)$ is at most
  $2\eps_k$-far from a rank-$2kS_k$ operator. So to finish the proof, we just
  define the function $M_\X(L,\eps)=2kS_k$, where $k$ is large enough so that
  $L\leq L_k$ and $\eps\geq \eps_k$.
\end{proof}

\section{Application to uniform $K$-homology}

We apply Theorem \ref{thm:uniform-WvNB} to show that any for ``nice'' locally
compact metric spaces $X$ (namely the ones with both locally and coarsely
bounded geometry), we can fix a suitable Hilbert space $H$ and a
representation $\phi:C_0(X)\to\B(H)$ and then represent any class in the
uniform $K$-homology $K^u_*(X)$ of $X$ as a uniform Fredholm module of the
form $(H,\phi,F)$. The class of ``nice'' spaces includes open manifolds with
bounded geometry and Rips complexes of finitely generated discrete groups
(see Example \ref{ex:nice-spaces}).

\begin{definition}
  We say that a metric space $X$ has \emph{locally bounded geometry}, if it
  admits a countable Borel decomposition $X=\cup_{i\in I}X_i$, such that
  $\X=\{\overline{X_i}\mid i\in I\}$ is an admissible class of
  compact metric spaces and that each $X_i$ has nonempty interior.\\
  We say that $X$ has \emph{coarsely bounded geometry}, if it contains a
  uniformly discrete subset $Y$, such that $\sup_{x\in X}d(x,Y)<\infty$ and
  which has bounded geometry in a sense that for each $R\geq0$, $\sup_{y\in
    Y}\#B(y,R)<\infty$.
\end{definition}

The tool we use to show that Theorem \ref{thm:uniform-WvNB} implies the above
statement in uniform $K$-homology is the notion of a uniformly covering
isometry. We recall its definition \cite[Definition 4.7]{spakula:unifKhomol};
it is a ``quantified version'' of the notion of a covering unitary
\cite[Definition 5.2.2]{higson-roe:analKhomol}.

\begin{definition}\label{def:unifcovers}
  Let $X$ and $Z$ be metric spaces, let $\varphi:C_0(X)\to C_0(Z)$ be a
  *-ho\-mo\-mor\-phi\-sm, $\phi_X:C_0(X)\to \B(H_X)$ and
  $\phi_Z:C_0(Z)\to\B(H_Z)$ be *-representations. We say that an isometry $V:
  H_Z\to H_X$ \emph{uniformly covers} $\varphi$, if for every $\eps>0$,
  $R,L\geq0$ there exists $M\geq0$, such that for every $f\in C_0(X)$ which
  is $L$-Lipschitz and has support of diameter at most $R$, there exists an
  operator with rank at most $M$, which is at most $\eps$-far from
  $V^*\phi_X(f)V-\phi_Z(\varphi(f))$. [This relation is denoted in
  \cite{spakula:unifKhomol} by
  $V^*\phi_X(\cdot)V\sim_{lua}\phi_Z(\varphi(\cdot))$; and it is a sharpening
  of $\sim$ which expresses that the difference of the two operators is
  compact.]
\end{definition}

\begin{corollary}\label{cor:unifcov}
  Let $X$ be a locally compact metric space which has both locally and
  coarsely bounded geometry. Let $\pi:C_0(X)\to \B(H)$ be a *-representation
  that misses $\K(H)\setminus\{0\}$. Then for any non-degenerate
  *-representation $\rho:C_0(X)\to \B(H_\rho)$ that misses the compacts, there
  exists an isometry $V:H_\rho\to H$ that uniformly covers the identity map
  $\id:C_0(X)\to C_0(X)$.
\end{corollary}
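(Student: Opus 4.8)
The plan is to localise the problem, using the two bounded-geometry hypotheses, so that it becomes Theorem~\ref{thm:uniform-WvNB} applied simultaneously over a decomposition of $X$ into uniformly bounded pieces, and then to glue the resulting local isometries into a single block-diagonal isometry $V:H_\rho\to H$. After the usual reductions we may assume $\pi$ and $\rho$ non-degenerate and $\pi$ faithful --- so that $\pi(f)\in\K(H)$ forces $\rho(f)=0$ --- all the hypotheses being preserved under passing to closed subsets of $X$.

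Next I would build the decomposition. From the coarse bounded geometry fix a uniformly discrete $R_0$-dense $Y\subseteq X$ with $\#\bigl(Y\cap B(y,R)\bigr)\le n(R)$ for all $y,R$. From the local bounded geometry fix a Borel decomposition $X=\bigsqcup_i X_i$ with $\{\overline{X_i}\}$ admissible and each $X_i$ of nonempty interior; by subdividing (which does not affect admissibility) we may assume $\diam\overline{X_i}\le D$ uniformly. Taking a common refinement of $\{X_i\}$ with a Borel partition subordinate to the cover $\{B(y,R_0)\}_{y\in Y}$, and refining further, one seeks a Borel decomposition $X=\bigsqcup_{j\in J}Z_j$ with: $\diam Z_j\le 2R_0$ for all $j$; each $Z_j$ of nonempty interior and each $\overline{Z_j}$ totally bounded, so that the family $\X'$ of completions $\widehat{\overline{Z_j}}$ is an admissible class of \emph{compact} metric spaces; and, crucially, the decomposition is uniformly locally finite \emph{at every scale}: for all $R\ge0$ and $\delta>0$ there is $K(R,\delta)$ such that any $S\subseteq X$ of diameter $\le R$ meets at most $K(R,\delta)$ of the $Z_j$ having $\diam Z_j>\delta$. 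Producing such a decomposition --- in particular the last property --- is the main obstacle: it is exactly where \emph{both} bounded-geometry hypotheses \emph{and} the local compactness of $X$ must be used. (Local compactness cannot be dropped: an infinite bouquet of unit simplices satisfies both bounded-geometry conditions, but, failing local compactness, admits no such decomposition.)

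Since the $Z_j$ are disjoint Borel sets, the projections $\rho(\chi_{Z_j})$ are orthogonal and sum strongly to $1$ (non-degeneracy), so $H_\rho=\bigoplus_j\rho(\chi_{Z_j})H_\rho$ and likewise $H=\bigoplus_j\pi(\chi_{Z_j})H$; moreover disjointness forces $\rho(f)$ and $\pi(f)$ to be block-diagonal for these decompositions, with $j$-th blocks $\tilde\rho_j(f|_{\overline{Z_j}})$ and $\tilde\pi_j(f|_{\overline{Z_j}})$, where $\tilde\rho_j,\tilde\pi_j$ are the $*$-representations of $C(\widehat{\overline{Z_j}})$ obtained by restricting a function on the compact completion to the dense subset $\overline{Z_j}$ (on which it is bounded and uniformly continuous), extending it by zero, and applying $\rho$, resp.\ $\pi$. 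One checks, using that $Z_j$ has nonempty interior together with the reductions above, that each $\tilde\pi_j$ is injective and misses the compacts. Theorem~\ref{thm:uniform-WvNB}, applied to the admissible class $\X'$, then yields a function $M_{\X'}$ and, for each $j$, an isometry $V_j:\rho(\chi_{Z_j})H_\rho\to\pi(\chi_{Z_j})H$ such that $V_j^*\tilde\pi_j(h)V_j-\tilde\rho_j(h)$ is $\eps$-close to an operator of rank $\le M_{\X'}(L,\eps)$ for every $h\in C_L(\widehat{\overline{Z_j}})$ and all $L\ge0$, $\eps\in(0,1]$. Set $V:=\bigoplus_j V_j$.

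Finally I would verify that $V$ uniformly covers $\id$. Given $\eps\in(0,1]$ and $R,L\ge0$, let $f\in C_0(X)$ be $L$-Lipschitz with $\diam(\supp f)\le R$; then $\|f\|_\infty\le LR$. The operator $V^*\pi(f)V-\rho(f)=\bigoplus_j\bigl(V_j^*\tilde\pi_j(f|_{\overline{Z_j}})V_j-\tilde\rho_j(f|_{\overline{Z_j}})\bigr)$ is block-diagonal. A block with $Z_j\cap\supp f=\emptyset$ vanishes. If $\diam Z_j\le\eps/(4L)$, then $f|_{\overline{Z_j}}$ lies within $\eps/4$ of a constant $c_j$, whence that block is within $\eps/2$ of $V_j^*\tilde\pi_j(c_j)V_j-\tilde\rho_j(c_j)=0$; the direct sum of all such blocks has norm $\le\eps/2$, and is absorbed into the allowed $\eps$-error. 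By the last property of the decomposition, at most $K\bigl(R,\eps/(4L)\bigr)$ of the remaining blocks are nonzero, and (after rescaling $f$ by $LR$ and moving the factor into the arguments of $M_{\X'}$) each of these is $\eps/2$-close to an operator of rank at most $M_{\X'}$ evaluated at arguments determined by $L,R,\eps$. Adding up, $V^*\pi(f)V-\rho(f)$ is within $\eps$ of an operator of rank at most $K\bigl(R,\eps/(4L)\bigr)\cdot M_{\X'}(\cdots)$, which depends only on $L$, $R$ and $\eps$; this is the required $M$ in Definition~\ref{def:unifcovers}. The single genuinely hard step is the construction of the decomposition above; a lesser technical point is retaining injectivity and the missing-compacts property for the restricted representations $\tilde\pi_j$.
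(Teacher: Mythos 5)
Your overall plan coincides with the paper's: decompose $X$ into pieces of uniformly bounded diameter, apply Theorem~\ref{thm:uniform-WvNB} blockwise to get isometries $V_i$ between the corresponding subspaces of $H_\rho$ and $H$, glue them into a block-diagonal isometry $V=\oplus V_i$, and use coarse bounded geometry to control how many blocks a function of compactly bounded support can touch. The paper, however, does this directly with the Borel decomposition $\{X_i\}$ furnished by the locally bounded geometry hypothesis (noting that admissibility already forces $\sup_i\diam X_i<\infty$), interprets $C(X_i)$ as restrictions of $C_0(X)$-functions so that the admissibility lemma still applies, and then simply asserts that coarse bounded geometry bounds the number of $X_i$'s meeting any $R$-ball; it does not pass to a refined decomposition.

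The genuine problem with your proposal is exactly the step you yourself flag: you never construct the refined decomposition $\{Z_j\}$ with the ``uniformly locally finite at every scale'' property, you only say such a decomposition should exist and call its production ``the main obstacle''. As written this is an admitted gap, not a proof. Your small-block absorption trick (pieces of diameter $\le\eps/(4L)$ contribute a block of norm $\le\eps/2$ which is swallowed into the allowed error) is a reasonable device for coping with the possibility of infinitely many tiny pieces accumulating in a ball --- a possibility the paper glosses over when it asserts the per-ball bound --- but it only helps if you in addition supply a bound on the number of pieces of diameter $>\delta$ per $R$-ball, and you give no argument for that either. Two smaller points: the detour via completions $\widehat{\overline{Z_j}}$ is unnecessary, since each $Z_j$ refines some $X_i$ and $\overline{Z_j}\subseteq\overline{X_i}$ is compact by hypothesis; and the claim that each $\tilde\pi_j$ is injective and misses the compacts is asserted but not checked, which matters since it is the hypothesis you need to invoke Theorem~\ref{thm:uniform-WvNB} blockwise.
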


\begin{remark}
  For a compact space $X$ any covering isometry is automatically uniformly
  covering; this is because we can disregard $R$ in this case and (as in
  Lemma \ref{lem:nets-in-CL}) each $C_L(X)$ is compact, hence it is
  sufficient to consider only finitely many compacts for approximation by
  finite-rank operators.
\end{remark}

\begin{proof}
  By assumption on $X$, there is a Borel decomposition $X=\cup_{i\in \N}X_i$,
  such that $\X=\{X_i\mid i\in\N\}$ is an admissible class and each $X_i$ has
  nonempty interior. Also note that it follows from the admissibility that
  $R_0=\sup_{i\in\N}(\diam(X_i))<\infty$.

  Notice that we can apply the whole proof of the Theorem
  \ref{thm:uniform-WvNB} also to the family $\X'=\{X_i\mid i\in\N\}$, where
  the algebras of functions $C(X_i)$ on $X_i$'s that we consider are the
  algebras of those functions that are restrictions of functions from
  $C_0(X)$ (the conclusion of Lemma \ref{lem:nets-in-CL} is still true for
  $\X'$).

  Denoting $P_i=\chi_{X_i}$, $H_i=\pi(P_i)H$ and
  $H_{\rho,i}=\rho(P_i)H_\rho$, we have decompositions $H=\oplus_{i\in\N}H_i$
  and similarly $H_\rho=\oplus_{i\in\N}H_{\rho,i}$. We may assume that $\pi$
  is non-degenerate. The representations $\pi$ and $\rho$ restrict to
  families of representations $\pi_i=\pi|:C(X_i)\to \B(H_i)$ and
  $\rho_i=\rho|:C(X_i)\to \B(H_{\rho,i})$. Applying Theorem
  \ref{thm:uniform-WvNB} we obtain a function $M$ and isometries
  $V_i:H_{\rho,i}\to H_i$, such that for any $\eps>0$ and $L\geq0$, taking
  any $f\in C_L(X_i)$ implies that $V_i^*\pi_i(f)V_i-\rho_i(f)$ is at most
  $\eps$-far from a rank-$M(\eps,L)$ operator. If we now denote
  $V=\oplus_{i\in\N}V_i:H_\rho\to H$, we obtain an isometry which satisfies
  the condition for uniform covering of $\id:C_0(X)\to C_0(X)$ for functions
  that are supported within some $X_i$.

  The final step is to realize that coarsely bounded geometry condition
  implies that given $R\geq0$, there is an upper bound on how many $X_i$'s
  can meet any ball of radius $R$ in $X$. Consequently the isometry $V$ is as
  required.
\end{proof}

Let us now come the application to uniform $K$-homology. For precise
definitions of uniform Fredholm modules and uniform $K$-homology groups, see
\cite[Section 2]{spakula:unifKhomol}. Let us only note here that they mimic
the usual definitions of Fredholm modules and analytic $K$-homology groups,
but where the relations ``is compact'' is replaced by the ``quantified
version'' of being compact, in the same spirit as in Definition
\ref{def:unifcovers}. The following proposition explains the connection
between uniformly covering isometries and uniform Fredholm modules.

\begin{proposition}\label{prop:unifcov}
  Let $X$ be a locally compact metric space and let $\phi:C_0(X)\to \B(H)$
  and $\rho:C_0(X)\to \B(H_\rho)$ be *-representations. Assume that there
  exists an isometry $V:H_\rho\to H$ which uniformly covers the identity map
  $\id:C_0(X)\to C_0(X)$. Then any uniform Fredholm module $(H_\rho,\rho,F)$
  is equivalent in the uniform $K$-homology group $K_*^u(X)$ to a
  uniform Fredholm module $(H,\phi,F')$.
\end{proposition}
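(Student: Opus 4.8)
The plan is to construct the uniform Fredholm module $(H,\phi,F')$ by transporting $F$ across the isometry $V$ and correcting for the fact that $V$ need not be a unitary. First I would set $F' = VFV^* + (1 - VV^*)$, which is the standard recipe: the summand $1 - VV^*$ is the projection onto the orthogonal complement of the range of $V$, and adding it makes $F'$ behave correctly on that complement (so that $(F')^2 - 1$ and $F' - (F')^*$ behave well there). I would then check that $(H,\phi,F')$ is a genuine uniform Fredholm module, i.e.\ that $[F',\phi(f)]$, $((F')^2-1)\phi(f)$ and $(F' - (F')^*)\phi(f)$ all satisfy the quantified compactness condition uniformly in $f \in C_L(X)$ of support-diameter at most $R$. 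For the commutator, I would expand
\[
[F',\phi(f)] = V[F,V^*\phi(f)V]V^* + [VV^*, \phi(f)] - [VV^*VFV^*, \phi(f)],
\]
(or organize the bookkeeping slightly differently), and use that $V^*\phi(f)V$ differs from $\rho(\id(f)) = \rho(f)$ by a low-rank-plus-small operator with rank bounded by the function $M$ from the uniform covering hypothesis; then $[F,\rho(f)]$ is controlled because $(H_\rho,\rho,F)$ is a uniform Fredholm module, and the remaining terms involving $1 - VV^*$ are handled because $\phi(f)$ and $V^*\phi(f)V$ are close in the same quantified sense, so $(1-VV^*)\phi(f)$ is itself low-rank-plus-small. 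The quadratic and self-adjointness terms are handled the same way.

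The other thing to establish is that $(H,\phi,F')$ is actually \emph{equivalent} to $(H_\rho,\rho,F)$ in $K^u_*(X)$, not merely that it is a module. Here I would invoke the standard fact (the quantified analogue of \cite[Section 5.1]{higson-roe:analKhomol}, available in \cite{spakula:unifKhomol}) that an isometric intertwiner inducing a map of uniform Fredholm modules yields an equivalence: concretely, one forms the module over $H_\rho \oplus H$ and exhibits an operator homotopy, or uses that $(H_\rho \ominus \text{nothing})$, the direct summand $(1-VV^*)H$ carrying the ``trivial'' operator $F' = 1$ on it, is a degenerate module and hence zero in $K^u_*$, so that adding it does not change the class; combined with the unitary implementation of $V$ onto its range, this gives $[(H,\phi,F')] = [(H_\rho,\rho,F)]$.

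The main obstacle I expect is purely bookkeeping: keeping the rank bounds uniform through all the algebraic manipulations. Every time I write $V^*\phi(f)V \approx \rho(f)$ I am discarding an operator of rank $\le M(\eps,R,L)$ and norm $\le \eps$; I need to make sure that after multiplying by bounded operators like $F$, $V$, $VV^*$ and taking a fixed finite number of sums, the accumulated rank is still bounded by a function of $(\eps, R, L)$ alone (it is, since rank is subadditive and unchanged under left/right multiplication by bounded operators, and norms scale by $\|F\| \le 1$ etc.), and that the accumulated norm error is still $O(\eps)$ so it can be absorbed by rescaling $\eps$. The conceptual content — that $F'$ is the right operator and that the low-rank corrections do not spoil uniformity — is exactly what the uniform covering hypothesis was designed to supply, so no genuinely new idea is needed beyond the classical proof in \cite{higson-roe:analKhomol}; one just carries the quantifiers along.
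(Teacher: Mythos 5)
Your construction $F' = VFV^* + (1-VV^*)$ and the verification that $(H,\phi,F')$ is a uniform Fredholm module is sound, but there is a misstatement in the justification and a genuine gap in the equivalence step.

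On the misstatement: you write that since $\phi(f)$ and $V^*\phi(f)V$ are close, ``$(1-VV^*)\phi(f)$ is itself low-rank-plus-small''. That is not true --- the compression $(1-VV^*)\phi(f)(1-VV^*)$ to the complement of the range of $V$ is in general a large operator. What \emph{is} controlled are the off-diagonal corners: using the $C^*$-identity, $V^*\phi(f)^*(1-VV^*)\phi(f)V = V^*\phi(f^*f)V - (V^*\phi(f)V)^*(V^*\phi(f)V)$, which is low-rank-plus-small because the uniform covering hypothesis gives $V^*\phi(f^*f)V\approx\rho(f^*f)=\rho(f)^*\rho(f)\approx(V^*\phi(f)V)^*(V^*\phi(f)V)$; hence $(1-VV^*)\phi(f)VV^*$ is low-rank-plus-small. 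Once you rewrite your commutator expansion in terms of corners, only the off-diagonal corners and the diagonal commutator $V[F,V^*\phi(f)V]V^*$ appear, so the module verification does go through --- but you must use the corner estimate, not the incorrect statement.

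On the equivalence step: this is where the argument actually breaks. You want to split $(H,\phi,F')$ into $(VH_\rho,\,\cdot\,,VFV^*)$ plus a degenerate module on $(1-VV^*)H$ with operator $1$. But the summand on $(1-VV^*)H$ would have to carry the compression $(1-VV^*)\phi(\cdot)(1-VV^*)$, and this is \emph{not} a $*$-homomorphism unless $VV^*$ reduces $\phi$ (the multiplicativity defect is precisely $(1-VV^*)\phi(f)VV^*\phi(g)(1-VV^*)$, which is small but not zero). So the alleged degenerate summand is not a Fredholm module, and one cannot invoke ``degenerate implies zero''. Nor can you replace the representation on the complement by $0$: then $\phi$ is no longer a uniformly compact perturbation of the direct sum, since $(1-VV^*)\phi(f)(1-VV^*)$ is not small. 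This is precisely the obstruction the paper is pointing at when it remarks that the direct Fredholm--module argument \`a la \cite[Prop.~8.3.14]{higson-roe:analKhomol} applies only when the covering isometry is onto. The paper's actual proof sidesteps this by routing through Paschke duality: it cites the description of $K^u_*$ as $K$-theory of dual C*-algebras, the fact that a uniformly covering isometry induces a $*$-homomorphism of dual algebras, and the comparison of the resulting ``partial'' $K$-homology groups. To close the gap in your approach you would need to import the same machinery (or some substitute, e.g.\ a Cuntz/CP picture of uniform $K$-homology in which the compressed representation is permissible) rather than the degenerate-summand trick.
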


\begin{proof}
  This follows from the results proved in \cite{spakula:unifKhomol}, namely
  Proposition 4.3 (which explains the description of uniform $K$-homology
  groups in terms of the $K$-theory of ``dual'' C*-algebras), Lemma 5.4
  (which shows that uniformly covering isometry induces a map between the
  dual C*-algebras) and Proposition 4.9 (where the ``partial'' uniform
  $K$-homology groups with specified $*$-representation $\phi$ are put
  together). See also the discussion preceding Lemma 4.8.

  Compare also \cite[Proposition 8.3.14]{higson-roe:analKhomol}, which
  supplies a more direct proof in the case when the uniformly covering
  isometry is actually onto.
\end{proof}

Putting together Corollary \ref{cor:unifcov} and Proposition
\ref{prop:unifcov}, we obtain the final result.

\begin{corollary}
  Let $X$ be a locally compact metric space which has both locally and
  coarsely bounded geometry. Let $\pi:C_0(X)\to \B(H)$ be a *-representation
  that misses $\K(H)\setminus\{0\}$. Then any uniform $K$-homology class over
  $X$ can be represented by a uniform Fredholm module of the form $(H,\pi,T)$
  for some $T\in\B(H)$.
\end{corollary}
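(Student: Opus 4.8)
The plan is to deduce this from the two results just proved, Corollary~\ref{cor:unifcov} and Proposition~\ref{prop:unifcov}, so the only real work is bookkeeping rather than new analysis. Start with an arbitrary uniform $K$-homology class over $X$, represented by some uniform Fredholm module $(H_\rho,\rho,F)$; by the standard normalization arguments for Fredholm modules (adding a degenerate module if necessary) we may assume that $\rho:C_0(X)\to\B(H_\rho)$ is non-degenerate. We would also like $\rho$ to miss the compacts; this is where one has to be slightly careful, since an arbitrary representation appearing in a Fredholm module need not do so. The remedy is the familiar one: direct-sum $(H_\rho,\rho,F)$ with a sufficiently large degenerate uniform Fredholm module built from $\pi$ itself (which by hypothesis misses $\K(H)\setminus\{0\}$), which changes nothing in $K^u_*(X)$ but arranges that the ambient representation misses the compacts. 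One should check that this ``ampliation'' operation is legitimate in the uniform setting; this is implicit in \cite{spakula:unifKhomol} and is the kind of routine verification I would relegate to a sentence.

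Once $\rho$ is non-degenerate and misses the compacts, Corollary~\ref{cor:unifcov} applies directly: it produces an isometry $V:H_\rho\to H$ that uniformly covers the identity map $\id:C_0(X)\to C_0(X)$. At this point the hypotheses of Proposition~\ref{prop:unifcov} are exactly met — we have two $*$-representations $\phi=\pi$ and $\rho$ of $C_0(X)$, and a uniformly covering isometry $V$ between the underlying Hilbert spaces — so the proposition tells us that $(H_\rho,\rho,F)$ is equivalent in $K^u_*(X)$ to a uniform Fredholm module of the form $(H,\pi,T)$ for some $T\in\B(H)$. Chaining the equivalences (the ampliation step, then the $V$-transport step) shows the original class is represented by $(H,\pi,T)$, which is the claim.

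The step I expect to be the only genuine obstacle is the one I flagged above: ensuring that after the harmless modifications the representation $\rho$ in the chosen cycle is both non-degenerate \emph{and} misses $\K(H_\rho)\setminus\{0\}$, since those are precisely the standing hypotheses of Corollary~\ref{cor:unifcov}. Non-degeneracy is cheap (cut down to the essential subspace, or add a point at infinity), but ``missing the compacts'' genuinely requires the direct-sum-with-$\pi$ trick and a check that it does not alter the uniform $K$-homology class — i.e.\ that the added summand is a degenerate uniform Fredholm module in the sense of \cite{spakula:unifKhomol}. Everything else is a direct invocation of the two boxed results, so I would keep the write-up to a single short paragraph, citing \cite{spakula:unifKhomol} for the normalization of uniform Fredholm modules.
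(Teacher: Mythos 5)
Your proposal matches the paper's proof, which is the one-line observation that Corollary~\ref{cor:unifcov} and Proposition~\ref{prop:unifcov} combine to give the result. You additionally flag the need to first normalize $\rho$ to be non-degenerate and to miss $\K(H_\rho)\setminus\{0\}$ before Corollary~\ref{cor:unifcov} applies; this is a genuine (if routine) preliminary step that the paper leaves implicit, and the ampliation-by-a-degenerate-module argument you sketch is the standard way to handle it.
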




\end{document}